\newcommand{\C}{\mathbb{C}}
\newcommand{\Z}{\mathbb{Z}}
\newcommand{\N}{\mathbb{N}}
\newcommand{\R}{\mathbb{R}}
\newcommand{\re}{\textnormal{Re}}
\numberwithin{equation}{section}
\newtheorem{Theorem}{Theorem}[section]
\newtheorem*{Theorem*}{Theorem}
\newtheorem{Corollary}[Theorem]{Corollary}
\newtheorem{Lemma}[Theorem]{Lemma}
\newtheorem{Proposition}[Theorem]{Proposition}
{ \theoremstyle{definition}

	\newtheorem{Remark}[Theorem]{Remark} }
\begin{document}

\allowdisplaybreaks

\renewcommand{\thefootnote}{}

\newcommand{\arXivNumber}{2307.12955}

\renewcommand{\PaperNumber}{001}

\FirstPageHeading

\ShortArticleName{A Note on the Equidistribution of 3-Colour Partitions}

\ArticleName{A Note on the Equidistribution of 3-Colour Partitions\footnote{This paper is a~contribution to the Special Issue on Asymptotics and Applications of Special Functions in Memory of Richard Paris. The~full collection is available at \href{https://www.emis.de/journals/SIGMA/Paris.html}{https://www.emis.de/journals/SIGMA/Paris.html}}}

\Author{Joshua MALES~$^{\rm ab}$}

\AuthorNameForHeading{J.~Males}

\Address{$^{\rm a)}$~School of Mathematics, University of Bristol, Bristol, BS8 1TW, UK}
\EmailD{\href{mailto:joshua.males@bristol.ac.uk}{joshua.males@bristol.ac.uk}}
\URLaddressD{\url{http://www.joshuamales.com}}

\Address{$^{\rm b)}$~Heilbronn Institute for Mathematical Research, University of Bristol, Bristol, BS8~1UG, UK}

\ArticleDates{Received July 25, 2023, in final form December 28, 2023; Published online January 01, 2024}

\Abstract{In this short note, we prove equidistribution results regarding three families of three-colour partitions recently introduced by Schlosser and Zhou. To do so, we prove an asymptotic formula for the infinite product $F_{a,c}(\zeta ; {\rm e}^{-z}) := \prod_{n \geq 0} \big(1- \zeta {\rm e}^{-(a+cn)z}\big)$ ($a,c \in \N$ with $0<a\leq c$ and $\zeta$ a root of unity) when $z$ lies in certain sectors in the right half-plane, which may be useful in studying similar problems. As a corollary, we obtain the asymptotic behaviour of the three-colour partition families at hand.}

\Keywords{asymptotics; partitions; Wright's circle method}

\Classification{11P82}

\renewcommand{\thefootnote}{\arabic{footnote}}
\setcounter{footnote}{0}

\section{Introduction}
The study of partitions has a long and storied history, and has been a fruitful field for mathematicians over the last century (and beyond). By a partition of a positive integer~$n$, we mean a non-increasing list of positive integers $\lambda_\ell$ such that $\sum_\ell \lambda_\ell =n$. Letting $p(n)$ count the total number of partitions of $n$, a famous result of Hardy and Ramanujan \cite{HardyRamanujan} is their celebrated asymptotic
\begin{equation*}%\label{HR}
	p(n)\sim \frac{1}{4\sqrt{3}n} {\rm e}^{\pi \sqrt{\frac{2n}{3}}},
\end{equation*}
as $n\rightarrow \infty$. In order to prove their result, they introduced the circle method~-- a powerful tool used widely in number theory which has various incarnations in the literature.

Over the next 100 years, mathematicians have studied numerous problems in the partition-theoretic world using the circle method. In recent years, one such problem has been the equidistribution properties of partition-theoretic objects over arithmetic progressions. For example, in~\cite{BCMO} Bringmann, Craig, Ono, and the author studied the asymptotic equidistribution of Betti numbers of certain Hilbert schemes, as well as the non-equidistribution of $t$-hooks of partitions.

In this short note, we prove further equidistribution results for three new families of three-colour partitions that were very recently introduced by Schlosser and Zhou~\cite{SZ}. In order to prove our results, we require an asymptotic for the function
\begin{align*}
	F_{a,c}(\zeta ; q) := \prod_{n \geq 0} \big(1- \zeta q^{a+cn}\big),
\end{align*}
where $a,c \in \N$ with $0<a\leq c$ and $\zeta$ a root of unity. We remark that $F_{1,1}$ is one of the functions studied in \cite{BCMO}, while $F_{a,c}(1;q)$ is one of the central objects studied by Craig in \cite{C}, and so this note lies at the intersection of these two papers. We note that there are many related studies in the area, for example the paper \cite{Ciolan} by Ciolan which discusses the parity of partitions in $k$-th powers and the paper \cite{Zhou} by Zhou which proves similar equidistribution properties for partitions with parts taken from a sequences determined by polynomials satisfying certain conditions.

In \cite{SZ}, the authors studied certain basic hypergeometric functions and their truncations. In turn, this allowed them to prove inequalities on certain families of three colour partitions.

Firstly, let $J_E(m,n)$ count the number of partitions of $n$ for which the difference between the number of parts congruent to $1 \pmod{3}$ and the number of parts congruent to $2 \pmod{3}$ is equal to $m$. Secondly, let $J_T(m,n)$ count number of partitions of $n$ into, say, red, green and blue parts, for which the difference between the numbers of red and green parts is equal to $m$. Finally, $J_G(m,n)$ count the number of partitions of $n$ into three colours, say red, green and blue, such that the size of each of the red and green parts are odd, the size of blue parts is even, and the difference between the numbers of red and green parts is equal to $m$.

Then in \cite{SZ} the authors showed that
\begin{align*}
	& J_E(\zeta;q) := \sum_{n \geq 0} \sum_{m \in \Z} J_E(m,n) \zeta^m q^n= \frac{1}{\big( \zeta q, \zeta^{-1}q^2, q^3;q^3\big)_\infty}, \\
		&J_T(\zeta;q) := \sum_{n \geq 0} \sum_{m \in \Z} J_T(m,n) \zeta^m q^n= \frac{1}{\big( \zeta q, \zeta^{-1}q, q;q\big)_\infty}, \\
	&J_G(\zeta;q) := \sum_{n \geq 0} \sum_{m \in \Z} J_G(m,n) \zeta^m q^n = \frac{1}{\big(\zeta q,\zeta^{-1}q,q^2;q^2\big)_\infty},
\end{align*}
where $(a;q)_\infty$ is the usual $q$-Pochhammer symbol and we define \[(a,b;q)_\infty := (a;q)_\infty (b;q)_\infty.\]

For $* \in \{E,T,G\}$, let $J_*(r,s;n)$ count the corresponding partition family but restricted to count when the given statistic is congruent to $r$ modulo $s$. Then the main purpose of this note is to prove the following theorem.

\begin{Theorem}\label{thm: main}
	The following equidistribution results are true.
	\begin{enumerate}
		\item[$(1)$] Let $s \in \N$ such that $3 \nmid s$. Then
		\begin{align*}
			J_E(r,s;n) = \frac{1}{4 \sqrt{3} s n}{\rm e}^{\pi \sqrt{\frac{2n}{3}}} \big(1+O\big(n^{-\frac{1}{2}}\big)\big).
		\end{align*}
		
		\item[$(2)$] Let $s \in \N$. Then
		\begin{align*}
			J_T(r,s;n) = \frac{1}{2^{\frac{7}{2}} s n^{\frac{3}{2}}} {\rm e}^{\pi\sqrt{2n}} \big(1+O\big(n^{-\frac{1}{2}}\big)\big).
		\end{align*}
		
		\item[$(3)$] Let $s\in \N$ such that $2 \nmid s$. Then
		\begin{align*}
			J_G(r,s;n) = \frac{{\rm e}^{\pi \sqrt{n}}}{8sn} \big(1+O\big(n^{-\frac{1}{2}}\big)\big).
		\end{align*}
	\end{enumerate}
\end{Theorem}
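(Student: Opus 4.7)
My plan is to use Wright's circle method applied to a root-of-unity dissection, with the asymptotic for $F_{a,c}(\zeta;{\rm e}^{-z})$ (the key lemma promised in the abstract) as the engine. The starting point is the orthogonality identity
\begin{align*}
J_*(r,s;n) = \frac{1}{s}\sum_{j=0}^{s-1} \zeta_s^{-jr}\, [q^n] J_*\big(\zeta_s^j; q\big), \qquad \zeta_s := {\rm e}^{2\pi {\rm i}/s},
\end{align*}
so that the problem reduces to understanding the coefficient asymptotics of $J_*(\zeta; q)$ for each $s$-th root of unity $\zeta$. The $j=0$ term is expected to furnish the main asymptotic, while the remaining $s-1$ summands must collectively contribute at most $O(n^{-1/2})$ times that main term.

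For $j=0$ the generating function simplifies: $J_E(1;q)=1/(q;q^3)_\infty(q^2;q^3)_\infty(q^3;q^3)_\infty = 1/(q;q)_\infty$, $J_T(1;q)=1/(q;q)_\infty^3$, and $J_G(1;q)$ reduces analogously to a standard Euler-type product. For each of these I would invoke the modular transformation of the Dedekind eta (equivalently, the classical $z \to 0$ asymptotic of $(q;q)_\infty$) to write $J_*(1;{\rm e}^{-z})\sim C_* z^{\alpha_*} {\rm e}^{\beta_*/z}$, and then apply Wright's saddle-point machinery: parametrise $q={\rm e}^{-z}$ along a circle of radius chosen so that $z_0 := \sqrt{\beta_*/n}$ is the saddle, split the contour into a major arc $|\im z|\leq z_0^{1+\delta}$ and a minor arc, compute the major arc by completing to the saddle-point integral for the modified Bessel function, and match the result with the exponential and polynomial factors claimed in Theorem~\ref{thm: main}.

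The substantive work is in $j \neq 0$. Here I would factor $J_*(\zeta_s^j;q)$ as a product of terms of shape $F_{a,c}(\zeta_s^{\pm j};q)^{\pm 1}$ times an ordinary partition factor, and then plug the asymptotic of $F_{a,c}$ into the exponent. The conditions $3 \nmid s$ (for $J_E$) and $2 \nmid s$ (for $J_G$) are precisely what guarantees that $\zeta_s^j$ is not a $c$-th root of unity for the relevant modulus $c \in \{2,3\}$, so that the $F_{a,c}$ factors lose a strictly positive amount of exponential growth in $1/z$ compared to the $\zeta=1$ case. This forces $|[q^n]J_*(\zeta_s^j;q)|$ to have a strictly smaller exponential order, which is more than enough to absorb them into the error $O(n^{-1/2})$ after the major-arc integration. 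For the minor arc, a standard bound $|F_{a,c}(\zeta;{\rm e}^{-z})|$ against its value at $|q|$ gives the required decay via a Meinardus-type estimate.

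The hard part, as usual in Wright's method, is the minor arc estimate uniformly in $\zeta$: I must show that on $|\im z| \geq z_0^{1+\delta}$ the function $J_*(\zeta_s^j;{\rm e}^{-z})$ is exponentially smaller than its major-arc size, and I must do so in a way that is uniform across all the roots of unity $\zeta_s^j$ that appear. Establishing a clean uniform version of the $F_{a,c}$ asymptotic in sectors of the right half-plane (as the abstract advertises) is what makes this manageable; without it, secondary poles at other roots of unity would compete with the contribution at $q=1$ and ruin the equidistribution.
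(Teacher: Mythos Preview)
Your proposal is correct and follows essentially the same route as the paper: orthogonality dissection, factorisation of $J_*(\zeta;q)$ through the $F_{a,c}$ products, the $F_{a,c}$ asymptotic to show the $j=0$ term dominates on the major arc, and a minor-arc bound to finish via Wright's circle method. The paper organises things slightly differently by summing first to $H_*(r,s;q)$ and applying Proposition~\ref{WrightCircleMethod} once (invoking Chern for the secondary-pole issue and Zhou for the minor-arc bound), whereas you extract coefficients term-by-term, but this is a cosmetic difference.
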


\begin{Remark}
	The conditions on $s$ in the first and third cases ensure that a single term in the asymptotic behaviour dominates when we take a multisection of the appropriate generating function. It is noted in the course of the proof how these restrictions could be lifted.
\end{Remark}

Let $J_*(n) := \sum_{m \in \Z} J_*(m,n)$. As an immediate corollary, we obtain the asymptotic behaviour of $J_E(n)$, $J_T(n)$ and $J_G(n)$.
\begin{Corollary}
	We have the following asymptotics.
	\begin{enumerate}
		\item[$(1)$] As $n \to \infty$
		\begin{align*}
			J_E(n) = \frac{1}{4 \sqrt{3} n}{\rm e}^{\pi \sqrt{\frac{2n}{3}}} \big(1+O\big(n^{-\frac{1}{2}}\big)\big).
		\end{align*}
		
		\item[$(2)$] As $n \to \infty$
		\begin{align*}
			J_T(n) = \frac{1}{2^{\frac{7}{2}} n^{\frac{3}{2}}} {\rm e}^{\pi\sqrt{2n}} \big(1+O\big(n^{-\frac{1}{2}}\big)\big).
		\end{align*}
		
		\item[$(3)$] As $n \to \infty$
		\begin{align*}
			J_G(n) = \frac{{\rm e}^{\pi \sqrt{n}}}{8n} \big(1+O\big(n^{-\frac{1}{2}}\big)\big).
		\end{align*}
	\end{enumerate}
\end{Corollary}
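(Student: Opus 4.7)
The plan is simply to specialise Theorem~\ref{thm: main} to the trivial modulus $s=1$. By definition $J_*(n) = \sum_{m \in \Z} J_*(m,n)$ for $*\in\{E,T,G\}$, and since the only residue class modulo $1$ is $0$, this coincides with $J_*(0,1;n)$ in the notation of the theorem. The divisibility hypotheses $3\nmid s$ and $2\nmid s$ in parts $(1)$ and $(3)$ are vacuous at $s=1$, and part $(2)$ imposes no condition at all. Substituting $s=1$ into each of the three asymptotics in Theorem~\ref{thm: main} collapses the factor $1/s$ to $1$ and yields exactly the three displayed formulas, so no further work is required.

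As a sanity check, one can cross-reference this with the generating series at $\zeta = 1$. The standard factorisation $(q,q^2,q^3;q^3)_\infty = (q;q)_\infty$ gives $J_E(1;q) = 1/(q;q)_\infty$, so that $J_E(n) = p(n)$ and the Hardy--Ramanujan asymptotic recalled in the introduction reproduces part $(1)$. Similarly, $J_T(1;q) = 1/(q;q)_\infty^3$ and $J_G(1;q) = 1/\big((q;q^2)_\infty^2 (q^2;q^2)_\infty\big)$ are well-studied infinite products whose leading asymptotics follow from Wright's circle method applied via the modular transformation of the Dedekind $\eta$-function, and one checks that the constants produced this way match those obtained by the specialisation above.

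Since the corollary is a direct specialisation of the main theorem rather than an independent assertion, there is no genuine obstacle to overcome here; all of the analytic work is already absorbed into the proof of Theorem~\ref{thm: main}, and the only thing worth pointing out is the compatibility of the statements at $s=1$.
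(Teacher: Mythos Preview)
Your proof is correct and matches the paper's approach exactly: the corollary is stated there as an ``immediate corollary'' of Theorem~\ref{thm: main}, obtained by specialising to $s=1$, with no separate argument given. Your sanity check that $J_E(n)=p(n)$ and $J_G(n)=\overline{p}(n)$ is also noted in the paper as a remark following the corollary.
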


\begin{Remark}
	In fact, we have that $J_E(n) = p(n)$ and $J_G(n) = \overline{p}(n)$, the number of overlined partitions of $n$, and we recover their classical asymptotic formulae here.
\end{Remark}

We remark that this is of course not the only approach to obtaining asymptotics of such generating functions, though we chose the presented approach to include the asymptotic results on the functions $F_{a,c}$. For example, work of Liu and Zhou \cite{LZ} also provides the asymptotic behaviour that is outlined here (and would also give a more precise error term). We leave the details for the interested reader.

\section{Preliminaries}

\subsection{Euler--Maclaurin summation}
We require the following generalization of the famous Euler--Maclaurin summation formula, proved in~\cite{BCMO}~-- we follow the set-up there. Consider a function $f$ on a domain in $\C$ which is of sufficient decay. That is, there exists $\varepsilon >0$ such that $f(w) \ll w^{-1-\varepsilon}$ as $|w| \rightarrow \infty$ in the domain.

 For $0\leq \theta < \frac{\pi}{2}$, define
\begin{align}\label{eqn: defn D_theta}
	D_{\theta} := \big\{ z=r{\rm e}^{{\rm i}\alpha} \colon r \geq 0 \text{ and } |\alpha| \leq \theta \big\}.
\end{align}
 For $s,z\in\C$ with $\operatorname{Re}(s)>1$, $\operatorname{Re}(z)>0$, the Hurwitz zeta function is defined by $\zeta(s,z):=\sum_{n=0}^\infty \frac{1}{(n+z)^s}$. We also need the digamma function $\psi(x):=\frac{\Gamma'(x)}{\Gamma(x)},$ and the Euler--Mascheroni constant $\gamma$. Let $B_n(x)$ be the $n$-th Bernoulli polynomial.

\begin{Lemma}[{\cite[Lemma 2.2]{BCMO}}]\label{lemma11}
	Let $0 < a \leq 1$ and $A \in \R^+$, and let $D_{\theta}$ be defined by \eqref{eqn: defn D_theta}. Assume that $f(z) \sim \sum_{n=n_0}^{\infty} c_n z^n$ $(n_0\in\Z)$ as $z \rightarrow 0$ in $D_\theta$. Furthermore, assume that $f$ and all of its derivatives are of sufficient decay in $D_\theta$. Then we have that
	\begin{align*}
		\sum_{n=0}^\infty f((n+a)z)\sim{} &\sum_{n=n_0}^{-2} c_{n} \zeta(-n,a)z^{n}+ \frac{I_{f,A}^*}{z}-\frac{c_{-1}}{z} ( \log (Az) +\psi(a)+\gamma )\\
&-\sum_{n=0}^\infty c_n \frac{B_{n+1}(a)}{n+1} z^n,
	\end{align*}
	as $z \rightarrow 0$ uniformly in $D_\theta$, where
	\begin{align*}
		I_{f,A}^*:=\int_{0}^{\infty} \left(f(u)-\sum_{n=n_0}^{-2}c_{n}u^n-\frac{c_{-1}{\rm e}^{-Au}}{u}\right){\rm d}u.
	\end{align*}
\end{Lemma}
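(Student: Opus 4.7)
The plan is to decompose $f$ into its singular terms at the origin and a regular remainder, sum each piece separately, and match the resulting ingredients to the right-hand side of the claimed expansion.

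First I would fix a large truncation index $N$ and write
$$f(w) = \sum_{k=n_0}^{-2} c_k w^k + \frac{c_{-1}\,{\rm e}^{-Aw}}{w} + h_N(w) + R_N(w),$$
where $h_N(w) := c_{-1}(1-{\rm e}^{-Aw})/w + \sum_{k=0}^N c_k w^k$ is analytic in a neighbourhood of $0$, and $R_N$ satisfies $R_N(w) = O(w^{N+1})$ uniformly in $D_\theta$ while $R_N$ together with all its derivatives retains sufficient decay at infinity. The exponential cut-off ${\rm e}^{-Aw}$ is precisely what makes the integral defining $I_{f,A}^*$ converge, and its complementary piece $c_{-1}(1-{\rm e}^{-Aw})/w$ is smooth at the origin and so has been absorbed into $h_N$.

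The singular terms with $k\leq -2$ are summed explicitly via
$$\sum_{m=0}^\infty c_k\big((m+a)z\big)^k = c_k z^k \sum_{m=0}^\infty (m+a)^k = c_k z^k \zeta(-k,a),$$
which produces the first sum in the lemma. For the $k=-1$ piece, I would invoke the classical asymptotic
$$\sum_{m=0}^\infty \frac{{\rm e}^{-(m+a)w}}{m+a} = -\log w - \psi(a) - \gamma + O(w),\qquad w\to 0 \text{ in } D_\theta,$$
which follows from the Lerch transcendent (equivalently from Binet's integral for $\log\Gamma$); setting $w=Az$ and multiplying by $c_{-1}/z$ yields the term $-\frac{c_{-1}}{z}(\log(Az)+\psi(a)+\gamma)$ in the expansion.

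For the regular piece $h_N+R_N$, I would apply classical Euler--Maclaurin summation to the function $u\mapsto(h_N+R_N)((u+a)z)$. The main term is $\frac{1}{z}\int_0^\infty(h_N+R_N)(v)\,{\rm d}v$, which together with the subtracted singular contributions assembles to $I_{f,A}^*/z$ by the very definition of $I_{f,A}^*$. The boundary contributions of Euler--Maclaurin at the endpoint $0$ produce, via Taylor expansion of $h_N$ there, exactly $-\sum_{n=0}^N c_n B_{n+1}(a) z^n/(n+1)$, noting that $h_N^{(n)}(0)/n! = c_n$ for $0\leq n\leq N$ (the Taylor coefficients of $c_{-1}(1-{\rm e}^{-Aw})/w$ contribute only to the error at this order). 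Both the Euler--Maclaurin remainder and the $R_N$-contribution are $O(z^{N+1})$ by the decay of the derivatives of $R_N$ in $D_\theta$. Letting $N\to\infty$ gives the asymptotic expansion in the statement.

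The main obstacle is making everything uniform in the sector $D_\theta$ rather than just on the positive real axis. This requires deforming the contour of $\int_0^\infty$ inside $D_\theta$ to accommodate the argument of $z$ when evaluating $\int_0^\infty(\cdot)(v)\,{\rm d}v$ with $v=(u+a)z$, and tracking the Euler--Maclaurin remainder along this deformed contour. The sufficient decay hypothesis on $f$ and all its derivatives throughout $D_\theta$ is exactly what licenses these deformations; aside from this uniformity bookkeeping, the decomposition into singular and regular parts reduces the problem to ingredients that are either classical (Euler--Maclaurin, $\Phi(e^{-w},1,a)$ asymptotics) or fully explicit (Hurwitz zeta values).
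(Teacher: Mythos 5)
The lemma you are proving is stated in the paper without proof and is cited verbatim from~\cite{BCMO}, so there is no in-paper proof to compare against; the assessment below is of your argument on its own terms. Your overall strategy --- split $f$ into its poles at the origin (with an exponential cutoff on the $1/w$ term), sum the singular pieces explicitly, and apply classical Euler--Maclaurin to the regular remainder --- is the right one and is essentially how~\cite{BCMO} proceed. However, there are two concrete gaps in the execution.

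First, the decomposition $f = \sum_{k\le -2}c_k w^k + c_{-1}{\rm e}^{-Aw}/w + h_N + R_N$ with $h_N(w) = c_{-1}(1-{\rm e}^{-Aw})/w + \sum_{k=0}^N c_k w^k$ is internally inconsistent with the requirement that $R_N$ have sufficient decay. Subtracting, $R_N = f - \sum_{k\le -2}c_k w^k - c_{-1}/w - \sum_{k=0}^N c_k w^k$; since every other term on the right decays as $|w|\to\infty$ while $\sum_{k=0}^N c_k w^k$ grows like $w^N$, the remainder $R_N$ must also grow like $w^N$. Consequently neither $\int_0^\infty h_N$ nor $\int_0^\infty R_N$ converges, and your claimed main term $\tfrac{1}{z}\int_0^\infty (h_N+R_N)$ is not well defined. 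The fix is to keep the regular remainder $g := f - \sum_{k\le -2}c_k w^k - c_{-1}{\rm e}^{-Aw}/w$ intact (it \emph{is} smooth at $0$ and of sufficient decay, and $\int_0^\infty g = I_{f,A}^*$) and apply Euler--Maclaurin to it directly, rather than splitting it further into a polynomial plus a tail.

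Second, and more substantively, the boundary terms of Euler--Maclaurin applied to $g$ involve the Taylor coefficients $g^{(n)}(0)/n! = c_n + c_{-1}\tfrac{(-1)^n A^{n+1}}{(n+1)!}$, \emph{not} $c_n$; the contribution of $c_{-1}(1-{\rm e}^{-Aw})/w$ is exact at every order, not an ``error at this order'' as you assert. These extra $A$-dependent pieces must cancel against the higher-order terms of the digamma/Lerch asymptotic, which you only record to $O(w)$. Using the integral representation $\sum_{m\ge 0}\tfrac{{\rm e}^{-(m+a)t}}{m+a}=\int_t^\infty \tfrac{{\rm e}^{-av}}{1-{\rm e}^{-v}}\,{\rm d}v$ together with the generating function of Bernoulli polynomials, one finds the full expansion
\begin{align*}
\sum_{m\ge 0}\frac{{\rm e}^{-(m+a)t}}{m+a} = -\log t - \psi(a) - \gamma + \sum_{m\ge 0}\frac{(-1)^m}{(m+1)!}\,\frac{B_{m+1}(a)}{m+1}\,t^{m+1},
\end{align*}
and substituting $t = Az$ and multiplying by $c_{-1}/z$ produces exactly the series that cancels the $c_{-1}\tfrac{(-1)^n A^{n+1}}{(n+1)!}\tfrac{B_{n+1}(a)}{n+1}z^n$ discrepancy in the Euler--Maclaurin boundary terms, leaving $-\sum_{n\ge 0}c_n\tfrac{B_{n+1}(a)}{n+1}z^n$ as claimed. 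This cancellation (equivalently, the $A$-independence of the final expansion, visible already in $\partial_A I_{f,A}^* = c_{-1}/A = \partial_A(c_{-1}\log(Az))$) is the genuinely nontrivial bookkeeping in the proof, and your write-up omits it. Your remarks about uniformity in $D_\theta$ and contour deformation are reasonable and in line with the standard treatment.
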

\begin{Remark}
	When $a=1$, we have that $\psi(a)+\gamma=0$.
\end{Remark}

\subsection{Wright's circle method}
In order to obtain the asymptotic behaviour of our coefficients, we make use of Wright's circle method, which is a more flexible version of the original circle method of Hardy and Ramanujan developed by Wright in \cite{Wright1,Wright2}. In essence, one uses Cauchy's residue theorem to write the coefficients as a contour integral of the generating function over a contour (chosen to be a~circle)~$C$ of radius less than one. Since we may choose the radius of the circle, we pick a radius such that $C$ tends to the unit circle as $n\to\infty$. This ensures that we may use the asymptotics given by Lemma~\ref{lemma11} for our generating function toward roots of unity on the unit circle. One then splits the contour into arcs where the generating functions has relatively large (resp.\ small) asymptotic growth, called the major (resp.\ minor) arcs. On the major arcs, we use asymptotic techniques to closely approximate the behaviour of the generating function, while on the minor arcs we bound more crudely.

In \cite{BCMO}, following work of Ngo and Rhoades \cite{NgoRhoades}, the following result was proved.

\begin{Proposition}[{\cite[Proposition 4.4]{BCMO}}] \label{WrightCircleMethod}
	Suppose that $F(q)$ is analytic for $q = {\rm e}^{-z}$, where ${z=x+{\rm i}y \in \C}$ satisfies $x > 0$ and $|y| < \pi$, and suppose that $F(q)$ has an expansion \[F(q) = \sum_{n=0}^\infty c(n) q^n\] near $1$. Let $c,N,M>0$ be fixed constants. Consider the following hypotheses:
	\begin{enumerate}
		\item[\rm(1)] As $z\to 0$ in the bounded cone $|y|\le Mx$ $($major arc$)$, we have
		\begin{align*}
			F({\rm e}^{-z}) = z^{B} {\rm e}^{\frac{A}{z}} \left( \sum_{j=0}^{N-1} \alpha_j z^j + O\big(|z|^N\big) \right),
		\end{align*}
		where $\alpha_s \in \C$, $A\in \R^+$, and $B \in \R$.
		
		\item[\rm(2)] In the bounded sector $Mx\le|y| < \pi$ $($minor arc$)$, we have
		\begin{align*}
			\lvert	F({\rm e}^{-z}) \rvert \ll {\rm e}^{\frac{1}{\mathrm{Re}(z)}(A - \kappa)}.
		\end{align*}
		for some $\kappa\in \R^+$.
	\end{enumerate}
	If {\rm(1)} and {\rm(2)} hold, then as $n \to \infty$, we have for any $N\in \R^+$
	\begin{align*}
		c(n) = n^{\frac{1}{4}(- 2B -3)}{\rm e}^{2\sqrt{An}} \left( \sum\limits_{r=0}^{N-1} p_r n^{-\frac{r}{2}} + O\big(n^{-\frac N2}\big) \right),
	\end{align*}
	where $p_r := \sum\limits_{j=0}^r \alpha_j c_{j,r-j}$ and
	\begin{align*}
		c_{j,r} := \dfrac{\bigl(-\frac{1}{4\sqrt{A}}\bigr)^r \sqrt{A}^{j + B + \frac 12}}{2\sqrt{\pi}} \dfrac{\Gamma\big(j + B + \frac 32 + r\big)}{r! \Gamma\big(j + B + \frac 32 - r\big)}.
	\end{align*}
\end{Proposition}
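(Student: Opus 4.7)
The plan is to extract $c(n)$ via Cauchy's coefficient formula and analyse the resulting integral by a saddle-point argument in the spirit of Wright. Writing $q = {\rm e}^{-z}$ and integrating around the circle $|q| = {\rm e}^{-x_n}$, where $x_n := \sqrt{A/n}$ is the saddle point of $\frac{A}{z} + nz$, one has
\begin{align*}
c(n) = \frac{1}{2\pi}\int_{-\pi}^{\pi} F\big({\rm e}^{-(x_n + {\rm i}y)}\big)\,{\rm e}^{n(x_n+{\rm i}y)}\,{\rm d}y.
\end{align*}
I would split the $y$-range into the major arc $|y| \leq M x_n$ and the minor arc $M x_n \leq |y| \leq \pi$, and estimate each contribution separately.

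On the minor arc, hypothesis (2) yields $|F({\rm e}^{-z})| \ll {\rm e}^{(A-\kappa)/x_n}$, while the factor $|{\rm e}^{nz}|$ equals ${\rm e}^{nx_n} = {\rm e}^{\sqrt{An}}$. Multiplying these and integrating trivially over an interval of length at most $2\pi$, the minor arc contributes at most $\ll {\rm e}^{2\sqrt{An}-\kappa\sqrt{n/A}}$, which is exponentially smaller than any polynomial-in-$n$ multiple of ${\rm e}^{2\sqrt{An}}$ and so is absorbed into the error term.

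On the major arc, I substitute the expansion from hypothesis (1) and reduce the problem to evaluating, for each $j \in \{0,1,\dots,N-1\}$, integrals of the form
\begin{align*}
\mathcal{I}_j := \frac{1}{2\pi}\int_{-Mx_n}^{Mx_n} (x_n + {\rm i}y)^{B+j}\,\exp\!\left( \frac{A}{x_n + {\rm i}y} + n(x_n + {\rm i}y)\right){\rm d}y,
\end{align*}
plus a tail coming from the $O(|z|^N)$ error, which by a direct size estimate with $|z| \asymp x_n$ on the major arc contributes only to the claimed error. I would first extend the finite segment to the full vertical line $\operatorname{Re}(z) = x_n$, the missing pieces being exponentially small by the same saddle-point decay, and then deform the vertical line to a Hankel contour wrapping the negative real axis. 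This turns $\mathcal{I}_j$ into a classical Hankel-type integral evaluable in closed form in terms of a modified Bessel function of order determined by $B+j$ and argument $2\sqrt{An}$. Inserting the well-known large-argument asymptotic expansion of modified Bessel functions, rewriting the resulting series in powers of $n^{-1/2}$, and combining with the coefficients $\alpha_j$ yields the claimed formula with $p_r = \sum_{j=0}^{r} \alpha_j c_{j,r-j}$.

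The main obstacle is the coefficient bookkeeping in this last step: one must compute the Hankel integrals $\frac{1}{2\pi {\rm i}}\int_{\mathcal{H}} z^{B+j}\,{\rm e}^{A/z+nz}\,{\rm d}z$ to all asymptotic orders and verify that the resulting coefficients match the precise ratio $\Gamma\big(j+B+\frac{3}{2}+r\big)/\Gamma\big(j+B+\frac{3}{2}-r\big)$ appearing in $c_{j,r}$. The analytic framework (saddle-point choice, contour splitting, bounds) is fairly standard once $x_n = \sqrt{A/n}$ has been identified; the technical weight lies in performing the Laplace expansion to all orders uniformly in $j$, following \cite{NgoRhoades}.
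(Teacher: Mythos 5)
This proposition is not proved in the paper at all: it is imported verbatim (up to notation) from \cite[Proposition~4.4]{BCMO}, which in turn follows Ngo and Rhoades \cite{NgoRhoades}. So there is no in-paper proof to compare against; what you have reconstructed is essentially the argument from those references, and the overall strategy you describe --- Cauchy's formula on the circle $|q|={\rm e}^{-x_n}$ with $x_n=\sqrt{A/n}$, the major/minor split, trivial bounding on the minor arc via hypothesis (2), and on the major arc a reduction to Bessel-type integrals $\mathcal{I}_j$ whose large-argument expansion produces the $c_{j,r}$ --- is exactly the right one. Your minor-arc estimate $2\sqrt{An}-\kappa\sqrt{n/A}$ is correct, as is the identification of the saddle value $2\sqrt{An}$.

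The one place the sketch would not survive being written out is the contour manipulation on the major arc. You propose to extend the segment $|y|\le Mx_n$ to the full vertical line $\operatorname{Re}(z)=x_n$ and discard the tails as ``exponentially small by the same saddle-point decay.'' This is not so: along the vertical line the exponential factor satisfies $\lvert {\rm e}^{A/z+nz}\rvert \to {\rm e}^{nx_n}$ as $|y|\to\infty$ (the $A/z$ contribution tends to $0$, not $-\infty$), so there is no decay at all, only oscillation from ${\rm e}^{{\rm i}ny}$. In particular the integral $\int_{-\infty}^{\infty}(x_n+{\rm i}y)^{B+j}\,{\rm e}^{A/(x_n+{\rm i}y)+n(x_n+{\rm i}y)}\,{\rm d}y$ does not converge absolutely when $B+j\ge -1$, and its conditional convergence and smallness of the tails must be obtained by a different argument (repeated integration by parts, or an explicit comparison lemma). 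In \cite{NgoRhoades} and \cite{BCMO} this is handled by keeping the contour compact: one compares the finite segment directly with a finite Hankel-shaped contour and shows the difference is exponentially small, then invokes the Bessel asymptotics. This is a patchable gap, but as stated the ``extend, then deform'' step is not justified. The subsequent bookkeeping --- matching the Bessel coefficients $a_r(\nu)=\frac{1}{r!\,2^r}\,\frac{\Gamma(\nu+\frac12+r)}{\Gamma(\nu+\frac12-r)}$ with $\nu=B+j+1$ to the claimed $c_{j,r}$, and the Cauchy product giving $p_r=\sum_{j=0}^r\alpha_j c_{j,r-j}$ --- is indeed the remaining work, and your description of it is accurate.
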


This result means that one need only verify the two hypotheses in order to obtain the asymptotic behaviour of the coefficients.

\subsection[Asymptotics of (zeta q\^{}a;q\^{}c)\_infty]{Asymptotics of $\boldsymbol{(\zeta q^a;q^c)_\infty}$}

For $0 < a \leq c$, consider the infinite product
\begin{align*}
	F_{a,c}(\zeta ; q) := \prod_{n \geq 0} \left(1- \zeta q^{a+cn}\right) = (\zeta q^a;q^c)_\infty.
\end{align*}
Notice that
\begin{align*}
	F_{1,1} (\zeta;q) = \prod_{n \geq 1} \left(1- \zeta q^{n}\right)
\end{align*}
is a function studied by Bringmann, Craig, Ono, and the author in \cite{BCMO}. The following proposition gives the asymptotics of $F_{a,c}(\zeta ; {\rm e}^{-z})$ where $z \to 0$ in $D_\theta$, mirroring \cite[Theorem~2.1]{BCMO}. To state the result, recall Lerch's transcendent
\begin{align*}
	\Phi(z,s,a):=\sum_{n=0}^\infty \frac{z^n}{(n+a)^s}.
\end{align*}

\begin{Proposition}\label{Prop: F_a,c}
	Let $\zeta$ be a primitive $b$-th root of unity for $b \geq 2$. As $z\to 0$ in $D_\theta$, we have that
	\begin{align*}
		F_{a,c}(\zeta;{\rm e}^{-z}) = (1-\zeta)^{\frac{1}{2} - \frac{a}{c}}\exp\left( -\frac{\zeta \Phi (\zeta,2, 1 )}{ c z} \right) ( 1+ O(|z|) ).
	\end{align*}
\end{Proposition}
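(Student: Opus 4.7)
My plan is to take logarithms, apply Lemma \ref{lemma11} with a suitable rescaling, and then exponentiate, using an explicit integral evaluation to identify the $1/z$ coefficient as the Lerch transcendent.

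First I would write
\begin{align*}
\log F_{a,c}(\zeta;{\rm e}^{-z}) = \sum_{n \geq 0} f\big((n+a/c)(cz)\big), \qquad f(w):=\log(1-\zeta {\rm e}^{-w}),
\end{align*}
so that, setting $u=cz$ and $\alpha=a/c\in(0,1]$, the left-hand side has exactly the form treated by Lemma \ref{lemma11} in $D_\theta$ (note $cz$ stays in some $D_{\theta'}$). Since $\zeta\neq 1$ (as $b\geq 2$), the function $f$ is analytic at $w=0$ with Taylor expansion $f(w)=\sum_{n\geq 0}c_n w^n$ starting at $c_0=\log(1-\zeta)$, $c_1=\zeta/(1-\zeta)$, etc., so in particular $n_0=0$ and $c_{-1}=0$. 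Moreover, $f(w)\sim -\zeta {\rm e}^{-w}$ as $\operatorname{Re}(w)\to\infty$ in $D_\theta$ (and likewise for all its derivatives), which gives the sufficient decay hypothesis.

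Next, Lemma \ref{lemma11} yields
\begin{align*}
\sum_{n\geq 0} f((n+\alpha)u) \sim \frac{I_{f,A}^*}{u}-\sum_{n\geq 0} c_n\frac{B_{n+1}(\alpha)}{n+1}u^n,
\end{align*}
with the $c_{-1}\log(Au)$ term absent. The key calculation is then
\begin{align*}
I_{f,A}^*=\int_0^\infty \log(1-\zeta {\rm e}^{-w})\,{\rm d}w=\int_0^1 \log(1-\zeta t)\frac{{\rm d}t}{t}=-\sum_{k\geq 1}\frac{\zeta^k}{k^2}=-\zeta\Phi(\zeta,2,1),
\end{align*}
obtained by the substitution $t={\rm e}^{-w}$ and termwise integration of $-\log(1-\zeta t)/t$, using absolute convergence on $[0,1]$. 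Combined with $B_1(\alpha)=\alpha-\tfrac12$, this gives
\begin{align*}
\log F_{a,c}(\zeta;{\rm e}^{-z}) = -\frac{\zeta\Phi(\zeta,2,1)}{cz}+\left(\tfrac12-\tfrac{a}{c}\right)\log(1-\zeta)+O(|z|),
\end{align*}
uniformly in $D_\theta$. Exponentiating produces exactly the claimed asymptotic.

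The main obstacle is the bookkeeping needed to justify that the constant and $O(|z|)$ terms arising from the Euler--Maclaurin expansion can be exponentiated to yield the clean multiplicative error $1+O(|z|)$; this is routine but must be checked uniformly in the sector $D_\theta$. A secondary point is verifying the sufficient decay of all derivatives of $f$ in $D_\theta$, which follows from the explicit form $f^{(k)}(w)=P_k(\zeta {\rm e}^{-w})/(1-\zeta {\rm e}^{-w})^k$ for suitable polynomials $P_k$, together with the fact that $|1-\zeta {\rm e}^{-w}|$ is bounded below in $D_\theta$ for $|w|$ large (since $\zeta\neq 1$), so no cancellation issues arise at $w=0$.
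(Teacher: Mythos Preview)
Your proof is correct, and it takes a genuinely different route from the paper's. The paper first expands $\log(1-\zeta q^{a+cn})$ as $-\sum_{k\geq 1}\zeta^k q^{k(a+cn)}/k$, sums the geometric series in $n$, and then splits $k=mb+j$ according to residues modulo $b$ so that $\zeta^k=\zeta^j$ becomes constant. This produces $b$ sums of the form $\sum_{m\geq 0}f((m+j/b)bz)$ with $f(x)=\dfrac{{\rm e}^{-ax}}{x(1-{\rm e}^{-cx})}$, a function with a genuine double pole at $0$; applying Lemma~\ref{lemma11} then brings in Hurwitz zeta values $\zeta(2,j/b)$ and the logarithmic $\psi(j/b)$ terms, which must be recombined using the identities $\sum_j\zeta^j\zeta(2,j/b)=b^2\zeta\Phi(\zeta,2,1)$ and $\sum_j\zeta^j\psi(j/b)=b\log(1-\zeta)$.

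Your approach sidesteps all of this by observing that since $\zeta\neq 1$, the function $f(w)=\log(1-\zeta {\rm e}^{-w})$ is already analytic at $w=0$, so Lemma~\ref{lemma11} applies in its simplest form ($n_0=0$, no $c_{-1}$ or $c_{-2}$ terms). The $1/z$ coefficient then comes directly from the dilogarithm integral $\int_0^\infty\log(1-\zeta {\rm e}^{-w})\,{\rm d}w=-\mathrm{Li}_2(\zeta)$, and the constant term from $-c_0B_1(a/c)$. This is shorter and more transparent; the paper's detour through the pole at $0$ and the root-of-unity regrouping is essentially reconstructing these two facts by hand. The paper's version does have the mild advantage that its auxiliary function $f(x)=\dfrac{{\rm e}^{-ax}}{x(1-{\rm e}^{-cx})}$ is the same one used for $\zeta=1$ (Proposition~\ref{Prop: Craig}), so the two propositions share a template, but for the present statement your argument is the cleaner one.
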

\begin{proof}
	First note that
	\begin{align*}
		\log F_{a,c}(\zeta;q) = \sum_{n \geq 0} \log\left(1- \zeta q^{a+cn}\right) &= -\sum_{n \geq 0} \sum_{k \geq 1} \frac{\zeta^k q^{ak+cnk}}{k} = -\sum_{k \geq 1} \frac{\zeta^k q^{ak}}{k \big(1-q^{ck}\big)}.
	\end{align*}
Let $k = mb+j$ to obtain
\begin{align*}
	- \sum_{j=1}^{b} \zeta^j \sum_{m \geq 0} \frac{ q^{a(mb+j)}}{(mb+j) \big(1-q^{c(mb+j)}\big)}.
\end{align*}
We may rewrite this as
\begin{align*}
	- z \sum_{j=1}^{b} \zeta^j \sum_{m \geq 0} f\left( \left(m +\frac{j}{b}\right) bz \right),
\end{align*}
where
\begin{align*}
	f(x) := \frac{{\rm e}^{-ax}}{x \left(1- {\rm e}^{-cx}\right)}.
\end{align*}
Note that $f$ has a Taylor expansion at $x=0$ of $\frac{1}{cx^2} + \frac{\frac{1}{2} - \frac{a}{c}}{x} +O(1)$.
Thus using Lemma \ref{lemma11}, we obtain that
\begin{align*}
		\log F_{a,c}(\zeta;q) = {}& \sum_{j=1}^{b} \zeta^j \biggl( -\frac{1}{b^2 c z} \zeta\left(2,\frac{j}{b} \right) - \frac{I^*_{f,1}}{b}\\
& + \frac{\frac{1}{2} -\frac{a}{c}}{b} \left(\log(bz) + \psi\left(\frac{j}{b}\right) +\gamma \right) +O(|z|) \biggr).
\end{align*}
Then using that $\sum_{j=1}^b \zeta^{j}=0$ and \cite[p.~39]{Campbell} (see \cite{BCMO} which corrects a minus sign and erroneous~$k$ on the right-hand side of Campbell)
\begin{align*}%\label{digam}
	\sum_{j=1}^b \psi \left( \frac{j}{b} \right) \zeta^{j}=b \log ( 1-\zeta )
\end{align*}
along with
\begin{align*}
	\sum_{j =1}^{b} \zeta^j \zeta\left(2 , \frac{j}{b}\right) = b^2 \zeta \Phi(\zeta,2,1)
\end{align*}
and exponentiating gives us that
\begin{align*}
		F_{a,c}(\zeta;q) = (1-\zeta)^{\frac{1}{2} - \frac{a}{c}}\exp\left( - \frac{\zeta \Phi(\zeta,2,1)}{ c z} \right) ( 1+ O(|z|) )
\end{align*}
as desired.
\end{proof}

Now consider $F_{a,c}(1;q)$. This function was considered by Craig \cite{C} who proved very precise asymptotics with explicit error bounds. Since we do not need the error terms explicitly in this paper, we state a more relaxed version of Craig's results.
\begin{Proposition}[{\cite[Proposition~3.1]{C}}]\label{Prop: Craig}
	As $z \to 0$ in $D_\theta$, we have that
	\begin{align*}
		F_{a,c}(1;{\rm e}^{-z}) = \frac{\Gamma\left(\frac{a}{c}\right)}{\sqrt{2\pi}} (cz)^{\frac{1}{2}-\frac{a}{c}} \exp\left(\frac{\pi^2}{6 cz} \right) (1+O(|z|) ).
	\end{align*}
	
\end{Proposition}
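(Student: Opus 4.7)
The plan is to mirror the proof of Proposition \ref{Prop: F_a,c} above, specialized to $\zeta=1$, the main new feature being that the constant term in the Euler--Maclaurin expansion no longer cancels and has to be computed explicitly. Starting from
\begin{align*}
\log F_{a,c}(1;q) = -\sum_{k \geq 1} \frac{q^{ak}}{k(1-q^{ck})},
\end{align*}
I would set $q = {\rm e}^{-z}$ and rewrite the right-hand side as $-z\sum_{m \geq 0} f((m+1)z)$, where
\begin{align*}
f(x) = \frac{{\rm e}^{-ax}}{x(1-{\rm e}^{-cx})}
\end{align*}
has the Taylor expansion $\frac{1}{cx^2}+\frac{\frac{1}{2}-\frac{a}{c}}{x}+O(1)$ already recorded in the previous proof, so $c_{-2}=\frac{1}{c}$, $c_{-1}=\frac{1}{2}-\frac{a}{c}$, and $n_0=-2$.

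Next I would invoke Lemma \ref{lemma11} with Hurwitz parameter equal to $1$, using $\psi(1)+\gamma=0$ and $\zeta(2,1)=\frac{\pi^2}{6}$. This gives
\begin{align*}
\sum_{m \geq 0} f((m+1)z) = \frac{\pi^2}{6cz^2} + \frac{I_{f,A}^*}{z} - \frac{\frac{1}{2}-\frac{a}{c}}{z}\log(Az) + O(1)
\end{align*}
as $z\to 0$ in $D_\theta$. Multiplying by $-z$ and exponentiating immediately produces the shape
\begin{align*}
F_{a,c}(1;{\rm e}^{-z}) = {\rm e}^{-I_{f,A}^*}(Az)^{\frac{1}{2}-\frac{a}{c}}\exp\left(-\frac{\pi^2}{6cz}\right)(1+O(|z|)),
\end{align*}
which is structurally the advertised asymptotic.

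The main obstacle, absent in the proof of Proposition \ref{Prop: F_a,c} because the sum $\sum_{j=1}^{b}\zeta^j$ vanishes for primitive $\zeta \neq 1$, is the identification of the constant $\exp(-I_{f,A}^*)A^{\frac{1}{2}-\frac{a}{c}}$ with the gamma factor $\frac{\Gamma(a/c)}{\sqrt{2\pi}}\,c^{\frac{1}{2}-\frac{a}{c}}$ appearing in the statement. I would attack this either directly, by taking $A=c$ and evaluating $I_{f,c}^*$ after the substitution $u\mapsto u/c$, which reduces it to a classical Mellin/log-gamma integral via the Malmsten-type representation of $\log\Gamma(a/c)$, or indirectly, by reducing to the known specialization $a=c=1$ (the modular transformation of the Dedekind eta function, which yields $(q;q)_\infty \sim \sqrt{2\pi/z}\exp(-\pi^2/(6z))$) and then propagating to general $a,c$ using the factorization $(q;q)_\infty = \prod_{j=1}^{c}(q^j;q^c)_\infty$, which forces all of the individual constants to match consistently.

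Finally, uniformity in the sector $D_\theta$ is inherited from the uniformity already built into Lemma \ref{lemma11}, so no additional argument is needed there. Overall the hard work is not the asymptotic extraction but the bookkeeping of the non-cancelling constant, which is exactly the step where Craig's treatment in \cite{C} provides explicit error control.
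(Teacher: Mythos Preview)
The paper does not give its own proof of this proposition: it is quoted from Craig \cite[Proposition~3.1]{C} and invoked as a black box in the proof of Theorem~\ref{thm: main}. So there is no argument in the paper to compare your proposal against.

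Your outline is nonetheless a correct and natural route, namely the $\zeta=1$ specialisation of the proof of Proposition~\ref{Prop: F_a,c}: apply Lemma~\ref{lemma11} with Hurwitz parameter $1$ to $-z\sum_{m\ge0}f((m+1)z)$ and then identify the surviving constant. One caveat on that last step: your second suggestion, the factorisation $(q;q)_\infty=\prod_{j=1}^{c}F_{j,c}(1;q)$, only determines the \emph{product} of the $c$ unknown constants, not each one individually, so by itself it is a consistency check rather than a proof; you still need the direct Malmsten/Binet-type evaluation of $I^*_{f,A}$ (your first option) to finish. Carrying that evaluation through actually yields $\frac{\sqrt{2\pi}}{\Gamma(a/c)}(cz)^{\frac12-\frac{a}{c}}\exp\!\big(-\frac{\pi^2}{6cz}\big)$; the formula as printed in the paper is the reciprocal (compare $a=c=1$ with the paper's own stated asymptotic for $(q;q)_\infty^{-1}$), and indeed it is $F_{a,c}^{-1}$ that is used downstream.
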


\section[Proof of Theorem 1.1]{Proof of Theorem \ref{thm: main}}

First note that we have
\begin{align*}
	& J_E(\zeta;q) = F_{1,3}^{-1}(\zeta;q) F_{2,3}^{-1}\big(\zeta^{-1};q\big) \big(q^3;q^3\big)_\infty^{-1},\qquad J_T(\zeta;q) = F_{1,1}^{-1}(\zeta;q) F_{1,1}^{-1}\big(\zeta^{-1};q\big) (q;q)^{-1}_\infty,\\
	&J_G(\zeta;q) = F_{1,2}^{-1}(\zeta;q) F_{1,2}^{-1}\big(\zeta^{-1};q\big) \big(q^2;q^2\big)^{-1}_\infty.
\end{align*}

Recall the well-known bound (for $|y|\le Mx$, as $z\to0$)
\begin{align*}
	\left({\rm e}^{-z};{\rm e}^{-z}\right)_\infty^{-1} = \sqrt{\frac{z}{2\pi}} {\rm e}^{\frac{\pi^2}{6z}} (1+O(|z|)).
\end{align*}
By combining this with the asymptotics given in Proposition~\ref{Prop: F_a,c}, we immediately obtain the following corollary.
\begin{Corollary}
	Let $\zeta$ be a primitive $b$-th root of unity. Then as $z \to 0$ in $D_\theta$, we have that
	\begin{align*}
J_E(\zeta;{\rm e}^{-z}) ={}& (1-\zeta)^{- \frac{1}{6}} \big(1-\zeta^{-1}\big)^{- \frac{1}{6}} \\
&\times \sqrt{\frac{3z}{2\pi}} \exp\left( \frac{\pi^2}{18z} + \frac{\zeta \Phi(\zeta,2,1) + \zeta^{-1} \Phi\big(\zeta^{-1},2,1\big)}{ 3z} \right) \left( 1+O(|z|)\right), \\
J_T(\zeta;{\rm e}^{-z}) ={}& (1-\zeta)^{\frac{1}{2}} \big(1-\zeta^{-1}\big)^{\frac{1}{2}} \\
&\times\sqrt{\frac{z}{2\pi}} \exp\left( \frac{\pi^2}{6z} + \frac{\zeta \Phi(\zeta,2,1) + \zeta^{-1} \Phi\big(\zeta^{-1},2,1\big)}{ z} \right) ( 1+O(|z|)), \\
J_G(\zeta;{\rm e}^{-z}) ={}& \sqrt{\frac{z}{\pi}} \exp\left( \frac{\pi^2}{12z} + \frac{\zeta \Phi(\zeta,2,1) + \zeta^{-1} \Phi\big(\zeta^{-1},2,1\big)}{ 2z} \right) ( 1+O(|z|)).
	\end{align*}
\end{Corollary}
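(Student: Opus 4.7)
The plan is a direct multiplication: for each of $J_E$, $J_T$, $J_G$, I would substitute the asymptotics from Proposition~\ref{Prop: F_a,c} and the quoted bound for $({\rm e}^{-z};{\rm e}^{-z})_\infty^{-1}$ into the three product factorisations displayed at the start of Section~3.

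First, I would invert Proposition~\ref{Prop: F_a,c} to record the template
\[
F_{a,c}^{-1}(\zeta;{\rm e}^{-z}) = (1-\zeta)^{\frac{a}{c}-\frac{1}{2}}\exp\left(\frac{\zeta\,\Phi(\zeta,2,1)}{cz}\right)(1+O(|z|)),
\]
valid uniformly as $z \to 0$ in $D_\theta$. Next, I would rescale the quoted bound by sending $z \mapsto cz$ to get
\[
({\rm e}^{-cz};{\rm e}^{-cz})_\infty^{-1} = \sqrt{\frac{cz}{2\pi}}\exp\left(\frac{\pi^2}{6cz}\right)(1+O(|z|)),
\]
noting that $D_\theta$ is stable under positive real dilation, so uniformity is preserved. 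Together these templates supply every factor in the factorisations: for $J_E$ the pairs $(a,c)=(1,3)$ at argument $\zeta$ and $(a,c)=(2,3)$ at argument $\zeta^{-1}$, combined with $c=3$ in the remaining factor; for $J_T$ two copies of $(a,c)=(1,1)$ at $\zeta$ and $\zeta^{-1}$, combined with $c=1$; and for $J_G$ two copies of $(a,c)=(1,2)$ at $\zeta$ and $\zeta^{-1}$, combined with $c=2$.

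Finally, I would multiply the three ingredients in each case and collect terms: the algebraic prefactors combine into a product of $(1-\zeta)^{\pm}$, $(1-\zeta^{-1})^{\pm}$, and $\sqrt{cz/(2\pi)}$; the $1/z$ contributions in the exponentials gather into $\frac{\pi^2}{6cz} + \frac{\zeta\Phi(\zeta,2,1)+\zeta^{-1}\Phi(\zeta^{-1},2,1)}{cz}$; and the three $(1+O(|z|))$ factors absorb into a single one. There is no real obstacle: Proposition~\ref{Prop: F_a,c} applies because $\zeta^{-1}$ is also a primitive $b$-th root of unity with $b\geq 2$, so both $1-\zeta$ and $1-\zeta^{-1}$ are nonzero. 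The only care required is bookkeeping the exponents of $(1-\zeta)$ and $(1-\zeta^{-1})$ correctly in each of the three cases, which is precisely why the paper presents this result as an immediate corollary.
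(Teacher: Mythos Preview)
Your proposal is correct and follows exactly the route the paper indicates: the paper simply says the corollary is obtained ``by combining'' the asymptotic for $({\rm e}^{-z};{\rm e}^{-z})_\infty^{-1}$ with Proposition~\ref{Prop: F_a,c}, and your write-up spells out precisely that multiplication, including the rescaling $z\mapsto cz$ and the observation that $\zeta^{-1}$ is again a primitive $b$-th root of unity so the proposition applies to both $F$-factors.
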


Using these results, we now prove Theorem \ref{thm: main}.

\begin{proof}[Proof of Theorem \ref{thm: main}]
It is a simple exercise using orthogonality of roots of unity to show that
\begin{align}\label{eqn: multisection}
H_*(r,s;q) := \sum_{n \geq 0} J_*(r,s;n) q^n = \frac{1}{s} \sum_{j=0}^{s-1} \zeta_b^{-rj} J_*\big(\zeta_s^j;q\big).
\end{align}

We now use Wright's circle method to show that the $j=0$ term dominates the asmyptotics for each choice $* \in \{ E,T,G \}$, which in turn implies that the main term asymptotic is independent of the residue class and hence the statistic is asymptotically equidistributed.

One may be concerned that poles other than $q=1$ may contribute to the main asymptotic term. However, by Theorem~1.1 of the beautiful paper of Chern~\cite{Chern} we know that for each of the infinite products, the pole at $q=1$ dominates, since each $F_{a,c}^{-1}$ is exponentially smaller at poles other than $q=1$. This is only true because of the assumptions on $s$ made in each case. For example, in case (3) if one had terms where $\zeta = -1$ then the pole at $q =-1$ would also have a term with the same growth as the main term asymptotic, and would cancel the main term already obtained thanks to the sum over $j$ in \eqref{eqn: multisection}. We leave the details of these further cases to the interested reader.

Using that $\lvert \zeta \Phi(\zeta,2,1) \rvert < \frac{\pi^2}{6}$ for $\zeta$ is a primitive root of unity not equal to~$1$, it is simple to see that on the major arc the $j=0$ term in \eqref{eqn: multisection} exponentially dominates all other terms, and thus using Proposition~\ref{Prop: Craig}, we have
\begin{align*}
		&H_E(r,s;q) = \frac{\Gamma\left(\frac{1}{3}\right) \Gamma\left(\frac{2}{3}\right)}{s (2\pi)^{\frac{3}{2}}} \sqrt{3z} \exp\left( \frac{\pi^2}{6z} \right) \left(1+O(|z|)\right) , \\
	&H_T(r,s;q) = \frac{1}{s (2\pi)^{\frac{3}{2}}} z^{\frac{3}{2}} \exp\left(\frac{\pi^2}{2z}\right) \left(1+O(|z|)\right), \\
	&H_G(r,s;q) = \frac{1}{2s} \sqrt{\frac{z}{\pi}} \exp\left( \frac{\pi^2}{4z} \right)\left(1+O(|z|)\right).
\end{align*}
Note that by Euler's reflection formula for the $\Gamma$-function, we obtain $\Gamma\left(\frac{1}{3}\right) \Gamma\left(\frac{2}{3}\right) = \frac{2 \pi}{\sqrt{3}}$.

 It remains to estimate each of $H_*(r,s;q)$ on the minor arcs, where $Mx \le |y|\le \pi$. To do so, we estimate $F_{a,c}(\zeta;q)$ on the minor arcs using arguments very similar to those given in \cite{BCMO} along with the classical fact that for some $\mathcal{C}>0$, we have
 \begin{align*}
 \big|({\rm e}^{-z};{\rm e}^{-z})_\infty^{-1}\big| \le x^{\frac12} {\rm e}^{\frac{\pi}{6x}-\frac{\mathcal{C}}{x}}.
 \end{align*}
This guarantees that the final $q$-Pochhammer in each case is exponentially smaller on the minor arc, and so we need only consider the $F_{a,c}$ functions.

We aim to show that (for $Mx \le |y|\le \pi$)
\begin{align*}
	\re\big(\log F_{a,c}^{-1}\big(\zeta_b^j ; q \big) \big) - \log\big(F_{a,c}^{-1}(1;|q|)\big) \leq - \frac{\mathcal{C}}{x}
\end{align*}
for some positive constant $\mathcal{C}$. This follows from the arguments of \cite[p.~1962]{Zhou} (see, in particular, line~7) with the parameters $f(n) = cn+ (a-c)$, $k=b$, $\delta = 1$, and $\ell =0$. Therefore, the contribution from the major arc exponentially dominates, and we obtain asymptotic equidistribution as claimed.
\end{proof}

\subsection*{Acknowledgements}
The author thanks W.~Craig, M.~Schlosser, and N.H.~Zhou, and the referees for many helpful comments on this note, in particular, for pointing out the related paper of Liu and Zhou~\cite{LZ}.

\pdfbookmark[1]{References}{ref}
\LastPageEnding

\end{document}